\title{The local limit of unicellular maps in high genus}
\author{Omer Angel
  \and Guillaume Chapuy
  \and Nicolas Curien
  \and Gourab Ray}
\date{\small \today}
  \crefname{theorem}{Theorem}{Theorems}
  \crefname{thm}{Theorem}{Theorems}
  \crefname{lemma}{Lemma}{Lemmas}
  \crefname{lem}{Lemma}{Lemmas}
  \crefname{remark}{Remark}{Remarks}
  \crefname{prop}{Proposition}{Propositions}
  \crefname{defn}{Definition}{Definitions}
  \crefname{corollary}{Corollary}{Corollaries}
  \crefname{section}{Section}{Sections}
  \crefname{figure}{Figure}{Figures}
\newtheorem{thm}{Theorem}[]
\newtheorem{open}{Question}[]
\newtheorem{lemma}[thm]{Lemma}
\newtheorem{prop}[thm]{Proposition}
\theoremstyle{definition}
\newtheorem{remark}[thm]{Remark}
\numberwithin{equation}{section}
\renewcommand{\P}{\mathbb P}
\newcommand{\E}{\mathbb E}
\newcommand{\N}{\mathbb N}
\newcommand{\cU}{\mathcal U}
\newcommand{\Tinf}{T^\infty}
\newcommand{\Geom}{\operatorname{Geom}}
\begin{document}
\maketitle

\begin{abstract}
  We show that the local limit of unicellular maps whose genus is
  proportional to the number of edges is a supercritical geometric
  Galton-Watson tree conditioned to survive. The proof relies on
  enumeration results obtained \emph{via} the recent bijection given by the
  second author together with F\'eray and Fusy.
\end{abstract}

\section{Introduction}\label{sec:intro}
Recently, the last author of this note studied the large scale structure of
random unicellular maps whose genus grows linearly with their
size~\cite{R13}. Our goal here is to identify explicitly the local limit of
the latter as a super-critical geometric Galton-Watson tree conditioned to
survive. \medskip

Motivated by the theory of two-dimensional quantum gravity, the study of
local limits (also known as Benjamini-Schramm limits \cite{BS01}) of
random planar maps and graphs has been rapidly developing over the last
years, since the introduction of the Uniform Infinite Planar Triangulation
(UIPT) by Angel \& Schramm~\cite{AS03}.  The UIPT is defined as the local
limit in distribution (see definition below) of uniform random
triangulations of the sphere, when their size tends to infinity.

It is natural to expect (see \cite{Gos12}) that, for any fixed $g\geq1$,
the UIPT is also the local limit of uniform random triangulations of a
surface of genus $g$ when their size tends to infinity (note that the
situation is totally different for \emph{scaling} limits, where the genus
affects the topology of the limiting surface~\cite{Bet}).  However, one
expects to obtain a totally different picture if one lets the genus of the
maps grow linearly with their size.  In this case, the limiting average
degree is strictly greater than in the planar case, so that some kind of
``hyperbolic'' behavior is expected, see \cite{AR13,Ben10,R13}. In this
note, we take a step in the study of this hyperbolic regime, by studying
the local limit of \emph{unicellular} maps whose genus is proportional to
their size.

Recall that a \emph{map} is a proper embedding of a finite connected graph
into a compact orientable surface considered up to oriented homeomorphisms,
and such that the connected components of the complement of the embedding
(called \emph{faces}) are topological disks.  Loops and multiple edges are
allowed, i.e.  our {graphs} are actually multigraphs.  As usual, all the
maps considered here are rooted, that is given with a distinguished
oriented edge.

Alternatively, a (rooted) map can be seen as a (rooted) graph together with
a cyclic orientation of the edges around each vertex.  This allows us to
view any connected subgraph of a map as a map structure, obtained by
restriction of the cyclic order.  (This can also be done in terms of the
embedding, but the surface must be modified to make all faces topological
discs.)  In particular, we can define the ball $B_{r}(m)$ to be the rooted
map obtained from $m$ by keeping all the edges and vertices which are at
distance less than or equal to $r$ from the origin of the root edge of $m$.
One can then define the \emph{local topology} \cite{AS03,BS01} between two
maps $m,m'$ (of arbitrary genera) using the metric
\[
d_{\mathrm{loc}}(m,m') = e^{\displaystyle -\sup \{r : B_r(m) \approx B_r(m')\}},
\]
where we write $M\approx M'$ if $M$ is isomorphic to $M'$ as maps.

A {\bf unicellular map} (or: {\bf one-face map}) is a map with only one
face.  This class attracted much attention, both because of its remarkable
enumerative and combinatorial properties (see, e.g.\ \cite{CFF} and
references therein), and because unicellular maps are the fundamental
building blocks in the study of general maps on surfaces and their scaling
limits (see, e.g.\ \cite{CMS, Bet}). In the planar case $g=0$, unicellular
maps are nothing more than trees.  For $n \geq 1$ and $g \geq 0$ denote by
$\cU_{g,n}$ the set of all unicellular maps with $n$ edges and genus $g$.
An application of Euler's characteristic formula shows that $v = n+1-2g$,
where $v$ is the number of vertices of the map.  In particular $\cU_{g,n} =
\varnothing$ as soon as $2g>n$.  For $g\leq n/2$ we shall denote by
$U_{g,n}$ a random map, uniformly distributed over $\cU_{g,n}$.

We write $\Geom(\xi)$ to denote a random variable which follows the
geometric distribution with parameter $\xi\in(0,1)$.  In other words,
\[
\P(\Geom(\xi) = k ) = (1-\xi)^{k-1}\xi \qquad \text{ for } k\ge 1.
\]
For any $\xi\in(0,1)$ we shall use $T_\xi$ to denote the Galton-Watson tree
with offspring distribution $\Geom(\xi)-1$.  For $\xi<1/2$ this tree is
super-critical.  We denote by $\Tinf_\xi$ the tree $T_\xi$ conditioned to be
infinite.

\begin{thm}
  \label{thm:main}
  Assume $g_n$ is such that $g_n/n\to\theta$ with $\theta\in[0,1/2)$. Then
  we have the following convergence in distribution for the local topology:
  \[
  U_{g_{n},n} \xrightarrow[n\to\infty]{(d)} \Tinf_{\xi_{\theta}},
  \]
  where $\xi_{\theta} = \frac{1-\beta_\theta}{2}$, and $\beta_\theta$ is
  the unique solution in $\beta \in [0,1)$ of
  \begin{equation}
    \label{eq:beta}
    \frac12 \left(\frac{1}{\beta}-\beta\right) \log \frac{1+\beta}{1-\beta}
    = (1-2\theta).
  \end{equation}
\end{thm}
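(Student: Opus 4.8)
The plan is to establish the local convergence ``ball by ball'', feeding in precise enumeration of genus-$g_n$ unicellular maps obtained from the Chapuy--F\'eray--Fusy bijection.

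\emph{Step 1 (reduction to finitely many balls).} By standard properties of the local topology --- it is generated by the clopen events $\{B_r(\cdot)\approx M\}$, which form a convergence-determining class --- it suffices to show that for every $r\ge 1$ and every finite rooted map $M$,
\[
\P\bigl(B_r(U_{g_n,n})\approx M\bigr)\ \xrightarrow[\,n\to\infty\,]{}\ \P\bigl(B_r(\Tinf_{\xi_\theta})\approx M\bigr).
\]
The right-hand side is explicit. Let $q_\theta=\xi_\theta/(1-\xi_\theta)=\frac{1-\beta_\theta}{1+\beta_\theta}$ be the extinction probability of $T_{\xi_\theta}$. Conditioning $T_{\xi_\theta}$ to be infinite yields a tree with an infinite ``backbone'' formed by the vertices with an infinite line of descent: a backbone vertex has $k\ge 1$ children with probability proportional to $(1-q_\theta^{\,k})(1-\xi_\theta)^{k}$, each child joins the backbone with the corresponding conditional probability, and each remaining child carries an independent copy of $T_{\xi_\theta}$ conditioned on extinction. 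This presents $\P\bigl(B_r(\Tinf_{\xi_\theta})\approx M\bigr)$ --- which vanishes unless $M$ is a tree --- through explicit local weights; for instance the root has degree $k$ with probability $\frac{\xi_\theta(1-\xi_\theta)}{1-2\xi_\theta}\bigl((1-\xi_\theta)^{k}-\xi_\theta^{\,k}\bigr)$.

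\emph{Step 2 (enumeration via the bijection).} The bijection of the second author with F\'eray and Fusy encodes a unicellular map of genus $g$ with $n$ edges by a plane tree with $n$ edges carrying a ``$C$-decoration'', a constrained family of groups of its vertices recording the $2g$ additional handles, the map being recovered by merging each group to a single vertex. From this we extract, by a saddle-point analysis of the associated generating function, a sharp asymptotic estimate for $|\cU_{g_n,n}|$; the saddle sits at $\beta_\theta$, which is where equation~\eqref{eq:beta} comes from. More importantly, the same analysis gives the refined counts $\#\{m\in\cU_{g_n,n}\colon B_r(m)\approx\tau\}$ for finite trees $\tau$, with error terms small and uniform enough in $\tau$ to be divided through in Step~3. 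Two combinatorial facts are used alongside: merging far-apart vertices of a plane tree increases degrees but creates no short cycle, and the $C$-groups that meet a bounded neighbourhood of the root are with high probability pairwise far apart in the tree. Together they give $\P\bigl(B_r(U_{g_n,n})\text{ is not a tree}\bigr)\to 0$.

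\emph{Step 3 (identification of the limit).} Dividing the refined count of Step~2 by $|\cU_{g_n,n}|$ and letting $n\to\infty$, the ratio factorises over the vertices of $\tau$ into local weights. A vertex of $\tau$ out of which a positive proportion of the edges, hence of the genus, must emanate behaves like a backbone vertex: the saddle value pins its offspring law to the $(1-q_\theta^{\,k})$-reweighting of $\Geom(\xi_\theta)-1$, whose mean is $\frac{1+\beta_\theta}{1-\beta_\theta}=\frac{1}{\xi_\theta}-1$; a vertex whose subtree stays bounded contributes the weights of $T_{\xi_\theta}$ conditioned on extinction. The global constraints --- exactly $n$ edges and genus exactly $g_n$ --- force a backbone at every level, which is precisely the effect of conditioning $T_{\xi_\theta}$ to be infinite. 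Comparing with Step~1 gives $\P\bigl(B_r(U_{g_n,n})\approx\tau\bigr)\to\P\bigl(B_r(\Tinf_{\xi_\theta})\approx\tau\bigr)$, hence the theorem.

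\emph{Main obstacle.} The crux is Step~2: producing enumerative estimates sharp and uniform enough to survive the division in Step~3. This is a delicate coefficient-extraction and saddle-point analysis of the generating series of $C$-decorated trees --- the source of~\eqref{eq:beta} --- together with the probabilistic control that the decoration creates no short cycle near the root. Once these are in place, recognising the limiting local weights as those of a supercritical geometric Galton--Watson tree conditioned to survive is essentially bookkeeping, guided by the computation in Step~1.
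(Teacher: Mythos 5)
Your overall strategy (compute $\P(B_r(U_{g_n,n})=\tau)$ for finite trees $\tau$, identify the limit with the explicit ball probabilities of $\Tinf_{\xi_\theta}$, locate $\beta_\theta$ as a saddle point of the $C$-decorated tree enumeration) is the right one, and your Step 1 computation of the backbone root-degree law matches what is needed. But there is a genuine gap at the crux you yourself flag: Step 2 asserts the existence of ``refined counts $\#\{m\in\cU_{g_n,n}:B_r(m)\approx\tau\}$ with error terms small and uniform enough in $\tau$'' and Step 3 asserts that the resulting ratio ``factorises over the vertices of $\tau$ into local weights'' with a backbone/non-backbone dichotomy, but no mechanism is given for either claim. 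The difficulty is that $B_r(m)$ is a ball in the \emph{map} metric, which the vertex-merging of the bijection distorts uncontrollably, so a direct coefficient extraction on the generating series of $C$-decorated trees does not see the event $\{B_r(m)=\tau\}$ at all. The missing idea is a \emph{surgery lemma}: if $\tau$ has height $r$, $k$ edges and $d$ vertices at height $r$, then replacing the $r$-neighbourhood of the root by a star with $d$ edges gives a bijection
\[
\{ m \in \cU_{g,n} : B_{r}(m) = \tau \}\ \longleftrightarrow\ \{m \in \cU_{g,n-k+d} \mbox{ with root degree } d \},
\]
(edge contraction preserves genus and face count). This reduces the refined count to exactly two quantities that the bijection \emph{does} give access to: the total count $\#\cU_{g,n'}$ (Lehman--Walsh plus a local CLT for i.i.d.\ odd-cycle lengths $X_\beta$, which is where \eqref{eq:beta} enters as $\E[X_{\beta_\theta}]=1/(1-2\theta)$) and the root-degree distribution (size-biased cycle length containing the root, combined with geometric degrees in a uniform plane tree). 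The product $\frac{\#\cU_{g_n,n-k+d}}{\#\cU_{g_n,n}}\cdot\P(\mbox{root degree}=d)\to(\xi(1-\xi))^{k-d}\cdot\frac{\xi(1-\xi)((1-\xi)^d-\xi^d)}{1-2\xi}$ then matches the $\Tinf_{\xi_\theta}$ ball probability globally; no per-vertex factorisation or backbone identification inside $\tau$ is needed or available.

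A second, lesser issue: your proposed proof that $\P(B_r(U_{g_n,n})\mbox{ is not a tree})\to0$ does not work as sketched. A short cycle comes from two vertices of the \emph{same} $C$-group being close in the \emph{map} metric, and in the high-genus regime a positive fraction of vertices lie in non-trivial groups, so ``groups meeting a bounded neighbourhood of the root are far apart in the tree'' neither holds in the relevant metric nor addresses the within-group collisions; controlling this is essentially the hard estimate of the prior work you would be trying to avoid. Fortunately it is unnecessary: once the limits $\P(B_r(U_{g_n,n})=\tau)$ over finite trees $\tau$ are shown to converge to numbers that already sum to $1$ (being the law of $B_r(\Tinf_{\xi_\theta})$), tightness and the vanishing of all non-tree ball probabilities follow for free. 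You should restructure the argument to exploit this.
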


Note that the mean of the geometric offspring distribution in
\cref{thm:main} is given by $(1+\beta_\theta)/(1-\beta_\theta)>1$ and in
particular the Galton-Watson tree is supercritical.

In order to prove \cref{thm:main} we first determine the root degree
distribution of unicellular maps using the bijection of \cite{CFF}.  This
is done in \cref{sec:enumeration}, where we also obtain an asymptotic
formula for $\#\cU_{g,n}$.  This enables us to compute in
\cref{sec:surgery} the probability that the ball of radius $r$ around the
root in $U_{g_{n},n}$ is equal to any given tree.  In \cite{R13} it is
shown that the local limit of unicellular maps is supported on trees.
However, we do not rely on this result.  In \cref{sec:identification} we
show that the probabilities computed below are sufficient to characterize
the local limit of $U_{g,n}$.

%%%%%%%%%%%%%%%%%%%%%%%%%%%%%%%%%%%%%%%%%%%%%%%%%%%%%%%%%%%%%%%%%%
\section{Enumeration and root degree distribution}
\label{sec:enumeration}

We begin be describing a bijection from \cite{CFF} between unicellular maps
and trees with some additional structure. A {\bf $C$-decorated tree} is a
plane tree together with a permutation on its vertices whose cycles all
have odd length, carrying an additional sign $\{\pm1\}$ associated with
each cycle. The \emph{underlying graph} of a $C$-decorated tree is the
graph obtained from the tree by identifying the vertices in each cycle of
the permutation to a single vertex. Hence if the tree has $n$ edges and the
permutation has $v$ cycles, the underlying graph has $n$ edges and $v$
vertices (recall that we allow loops and multiple edges). We also note that
at any vertex $v$ of the tree which is a fixed point of the permutation,
the cyclic order on the edges around $v$ in the tree and in the resulting
unicellular map are the same. This will be of use in our analysis of the
case $g=o(n)$.

\begin{thm}[\cite{CFF}]
  \label{thm:bij}
  Unicellular maps with $n$ edges and genus $g$ are in $2^{n+1}$ to $1$
  correspondence with $C$-decorated trees with $n$ edges and $s=n+1-2g$
  cycles.  This correspondence preserves the underlying graph.
\end{thm}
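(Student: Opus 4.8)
The plan is to construct the explicit bijection of~\cite{CFF} underlying this statement and to analyse its fibres. I would first fix convenient encodings of the two sides. A rooted unicellular map with $n$ edges is equivalent to a fixed-point-free involution $\alpha$ on the set of $2n$ darts, normalised so that the unique face is the standard $2n$-cycle $\varphi=(1\,2\,\cdots\,2n)$ with root dart $1$; the vertices are then the cycles of $\sigma:=\varphi\alpha$, Euler's formula gives $\#\{\text{cycles of }\sigma\}=n+1-2g$, and the underlying graph is recorded by $\alpha$ together with the partition of the darts into the cycles of $\sigma$. A $C$-decorated tree is the data of a plane tree $T$ with $n$ edges, a permutation $\pi$ of the $n+1$ vertices of $T$ whose cycles all have odd length, and a sign on each cycle of $\pi$; its underlying graph is $T$ with each cycle of $\pi$ contracted to a point.

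The forward map $\Phi$ from $C$-decorated trees to unicellular maps is a ``gluing'' operation. Given a cycle $c=(u_0\,u_1\,\cdots\,u_{2k})$ of $\pi$, one merges $u_0,\dots,u_{2k}$ into a single vertex by interleaving the $2k+1$ cyclic orders of darts around the $u_i$ into a single cyclic order; the key local fact is that among all such interleavings \emph{exactly two} yield a map that is still unicellular, and the sign $\varepsilon(c)\in\{\pm 1\}$ selects one of them. Odd length is forced here because merging $\ell$ vertices changes the genus by $(\ell-1)/2$, which must be an integer. Performing this merge for every cycle of $\pi$ — fixed points being left untouched, which is exactly the compatibility of cyclic orders noted just before the statement — produces $\Phi(T,\pi,\varepsilon)$; since a cycle of length $2k_c+1$ adds $k_c$ to the genus, and $\sum_c(2k_c+1)=n+1$ while $\pi$ has $n+1-2g$ cycles, one gets $\sum_c k_c=g$, so the output is a unicellular map with $n$ edges and genus $g$ whose underlying graph is that of $(T,\pi,\varepsilon)$ by construction. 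One must either check that $\Phi$ is independent of the order in which the cycles are processed, or fix a canonical order once and for all.

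It then remains to show that each unicellular map has exactly $2^{n+1}$ preimages under $\Phi$. For this I would describe the inverse ``slicing'': in a unicellular map $M$, order the vertices by their first visit around the face, locate a vertex whose incident corners appear in the face-walk in the contiguous pattern produced by a merge, and split it, recording the sign; iterating until a plane tree remains reconstructs a triple $(T,\pi,\varepsilon)$ with $\Phi(T,\pi,\varepsilon)=M$. The content of the fibre count is that the local choices left open in this procedure — in the slicing steps, together with the signs attached to the vertices that are never split — number exactly $2^{n+1}$ and produce pairwise distinct $C$-decorated trees; this matches the genus-$0$ case, where $\Phi$ is simply ``forget the signs on the $n+1$ vertices of the plane tree''. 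Equivalently, one obtains an honest bijection between $C$-decorated trees with $n$ edges and unicellular maps with $n$ edges carrying an auxiliary $2^{n+1}$-element marking.

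The main obstacle is the combinatorial core of the two middle paragraphs: showing that merging an odd cycle admits \emph{exactly} two unicellular interleavings, that it raises the genus by the expected amount, and dually that the slicing is well-defined (there is always a sliceable vertex unless $M$ is already a tree), genuinely inverts the gluing, and leaves open precisely $2^{n+1}$ consistent choices of local data. Pinning this down — in particular the appearance of the clean factor $2^{n+1}$, independent of $M$ — is where the work lies; once the local surgery and its inverse are understood, counting the genus and checking that the underlying graph is preserved are immediate.
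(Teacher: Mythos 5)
First, a point of comparison: the paper does not prove this statement at all --- it is imported verbatim from \cite{CFF} --- so the only question is whether your sketch would actually establish it. It would not, because the combinatorial lemma at its core is false. You claim that when the vertices of an odd cycle are merged, ``exactly two'' interleavings of their corner cyclic orders yield a unicellular map, with the sign selecting one of them. Take the plane tree $K_{1,3}$ (a centre and three leaves) and merge the three leaves, which form a $3$-cycle of the permutation. The result is the dipole with two vertices and three parallel edges; the new vertex carries three darts, hence admits exactly two cyclic orders, and computing the face permutation shows that exactly \emph{one} of them gives a one-face map of genus $1$, the other giving the planar dipole with three faces. By contrast, for the path on three vertices with all three vertices merged into one, two of the six admissible cyclic orders are unicellular. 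So the number of unicellular interleavings is not a universal constant $2$, and the ``sign selects one of two gluings'' mechanism does not exist. A second, independent problem: even if each cycle did contribute a clean local factor of $2$, that would only account for $2^{s}=2^{n+1-2g}$; formula \eqref{eq:nbCTrees} shows that the \emph{unsigned} pairs (tree, odd-cycle permutation) are $4^{g}$-to-one over unicellular maps, and it is this non-local factor $4^{g}$ --- not the signs --- that constitutes the real content of the theorem. Your sketch defers exactly this point (``where the work lies''), so the proposal is an outline of a strategy whose key step is both unproved and, as stated, incorrect.

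For the record, the argument in \cite{CFF} is structured quite differently. The gluing of $2k+1$ vertices of a unicellular map is \emph{canonical}: there is a single distinguished interleaving, obtained by intertwining the corners of the merged vertices according to their order of appearance along the unique face contour, and it produces a map of genus increased by $k$ together with a distinguished \emph{trisection}. The $2^{n+1}$-to-$1$ statement is then proved by induction on the genus using Chapuy's trisection lemma (a unicellular map of genus $g$ has exactly $2g$ trisections), with the signs on the cycles absorbing the factors of $2$ that arise at each step of the recursion; the parity and Euler-characteristic observations you make (odd cycles are forced, a cycle of length $2k+1$ raises the genus by $k$, the underlying graph is preserved) are correct but are the easy part. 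If you want to prove the theorem rather than cite it, you would need to develop the trisection machinery or find a genuinely new route to the factor $4^{g}$.
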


Using this correspondence we will obtain the two main theorems of this
section, \cref{thm:root_deg,thm:enum}.  Before stating these theorems we
introduce a probability distribution on the odd integers that will play an
important role in the sequel.  For $\beta \in(0,1)$, we let $X_\beta$ be a
random variable taking its values in the odd integers, whose law is given
by:
\[
\P(X_\beta = 2k+1) := \frac{1}{Z_\beta} \frac{\beta^{2k+1}}{2k+1},
\]
where
\[
Z_\beta = \sum_{k\geq0} \frac{\beta^{2k+1}}{2k+1} =
  \frac12\log\frac{1+\beta}{1-\beta} = \operatorname{arctanh} \beta.
\]
It is easy to check that \cref{eq:beta} is equivalent to 
 \begin{equation}
   \label{eq:beta2}
   \E[X_{\beta}] = \frac{1}{Z_{\beta}} \frac{\beta}{1-\beta^2}
   = \frac{1}{1-2\theta}.
 \end{equation}

\begin{thm}\label{thm:enum}
  Assume $g_n\sim\theta n$. Let $\beta_n$ be such that $\E[ X_{\beta_n}] =
  \frac{n}{s} + o\left(n^{-1/2}\right)$ and $s_n=n+1-2g_n$.
  As $n$ tends to infinity we have
  \[
  \#\cU_{g_n,n} \sim A_\theta \, \frac{(2n)!}{n!s_n!\sqrt{s_n}} \,
  \frac{(Z_{\beta_n})^{s_n}}{4^{g_n} {\beta_n^{n+1}}}, 
  \]
  where
  $A_{\theta} = \frac{2}{\sqrt{2\pi \operatorname{Var}(X_{\beta_{\theta}})}}$.
\end{thm}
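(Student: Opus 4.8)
My plan is to reduce $\#\cU_{g_n,n}$, via the bijection of \cref{thm:bij} and exponential generating functions, to a local (anticoncentration) estimate for a sum of i.i.d.\ integer random variables, namely independent copies of $X_{\beta_n}$.

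\emph{Reduction to a coefficient.} By \cref{thm:bij}, $\#\cU_{g_n,n}$ is $2^{-(n+1)}$ times the number of $C$-decorated trees with $n$ edges and $s_n$ cycles. Such a tree is the data of a plane tree with $n$ edges --- there are $\mathrm{Cat}(n)=\frac{(2n)!}{n!(n+1)!}$ of these, on $n+1$ vertices --- together with a permutation of its $n+1$ vertices having exactly $s_n$ cycles, all of odd length, and a sign on each cycle. Writing $p(N,s)$ for the number of permutations of $N$ elements with exactly $s$ cycles all of odd length, and recalling that the exponential generating function of a single cyclic permutation of odd size is $\sum_{k\ge0}\frac{x^{2k+1}}{2k+1}=\operatorname{arctanh} x$, permutations with all cycles odd have exponential generating function $\exp(u\operatorname{arctanh} x)$ (with $x$ marking elements, $u$ marking cycles), so $p(N,s)=\frac{N!}{s!}[x^N](\operatorname{arctanh} x)^s$. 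Multiplying by $2^{s_n}$ for the signs and using $\mathrm{Cat}(n)\,(n+1)!=(2n)!/n!$ together with $2^{s_n-(n+1)}=4^{-g_n}$, we obtain
\[
  \#\cU_{g_n,n}\;=\;4^{-g_n}\,\frac{(2n)!}{n!\,s_n!}\;[x^{n+1}](\operatorname{arctanh} x)^{s_n},
\]
so everything comes down to the asymptotics of $c_n:=[x^{n+1}](\operatorname{arctanh} x)^{s_n}$ when $s_n/n\to 1-2\theta\in(0,1)$.

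\emph{Exponential tilting and local limit theorem.} From the definition of $X_\beta$ one reads off the identity $\operatorname{arctanh}(x)=Z_\beta\,\E[(x/\beta)^{X_\beta}]$, valid for any $\beta\in(0,1)$; hence if $X^{(1)},X^{(2)},\dots$ are i.i.d.\ copies of $X_\beta$ and $S_m=X^{(1)}+\dots+X^{(m)}$, then
\[
  c_n\;=\;Z_\beta^{s_n}\,[x^{n+1}]\,\E\big[(x/\beta)^{S_{s_n}}\big]\;=\;\frac{Z_\beta^{s_n}}{\beta^{n+1}}\,\P\big(S_{s_n}=n+1\big),
\]
for any $\beta\in(0,1)$. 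I now take $\beta=\beta_n$ as in the statement. By \cref{eq:beta2}, $\beta\mapsto\E[X_\beta]$ is a continuous increasing bijection of $(0,1)$ onto $(1,\infty)$ (this is what makes \cref{eq:beta} well-posed), and $\E[X_{\beta_\theta}]=\frac1{1-2\theta}=\lim_n (n+1)/s_n$, so $\beta_n\to\beta_\theta$ and the $\beta_n$ stay in a compact subinterval of $(0,1)$; moreover $\E[S_{s_n}]=s_n\E[X_{\beta_n}]=n+1+o(\sqrt n)$, the shift between $n/s_n$ and $(n+1)/s_n$ being $O(1/n)$ and hence absorbed by the error $o(n^{-1/2})$ after multiplication by $s_n=O(n)$. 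Since $X_{\beta_n}$ assigns positive probability to every odd positive integer, its support lies on a coset of $2\Z$ with span exactly $2$; as $\beta_n$ is bounded away from $1$ it has uniformly bounded third moment and $\operatorname{Var}(X_{\beta_n})\to\operatorname{Var}(X_{\beta_\theta})\in(0,\infty)$. The integer $n+1$ sits in the correct residue class (because $n+1-s_n=2g_n$) and within $o(\sqrt{s_n})$, i.e.\ $o(1)$ standard deviations, of $\E[S_{s_n}]$, so the local central limit theorem for lattice distributions gives $\P(S_{s_n}=n+1)\sim\frac{2}{\sqrt{2\pi\,s_n\operatorname{Var}(X_{\beta_n})}}$, the prefactor $2$ being precisely the lattice span. (Equivalently: in the Cauchy integral for $c_n$ over $\{|x|=\beta_n\}$, the modulus of $\operatorname{arctanh} x$ peaks at the \emph{two} real points $\pm\beta_n$, each contributing the same Gaussian saddle-point integral.) Plugging this back, and replacing $\operatorname{Var}(X_{\beta_n})$ by its limit, yields
\[
  \#\cU_{g_n,n}\;\sim\;4^{-g_n}\,\frac{(2n)!}{n!\,s_n!}\cdot\frac{Z_{\beta_n}^{s_n}}{\beta_n^{n+1}}\cdot\frac{2}{\sqrt{2\pi\,s_n\operatorname{Var}(X_{\beta_\theta})}}\;=\;A_\theta\,\frac{(2n)!}{n!\,s_n!\sqrt{s_n}}\,\frac{Z_{\beta_n}^{s_n}}{4^{g_n}\beta_n^{n+1}},
\]
which is the assertion.

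\emph{Main obstacle.} The substantive point is the uniformity in the local limit step: $\beta_n$ changes with $n$, but since it converges to $\beta_\theta$ and remains in a fixed compact subinterval of $(0,1)$, all the moments of $X_{\beta_n}$ and the behaviour of its characteristic function away from $0$ and $\pm\pi$ are controlled uniformly, so the Fourier-analytic proof of the LCLT goes through with error terms uniform in $n$. One must also verify that the $o(n^{-1/2})$ latitude in choosing $\beta_n$ is harmless: a perturbation of $\E[X_{\beta_n}]$ of size $\varepsilon_n=o(n^{-1/2})$ moves $\beta_n$ by $O(\varepsilon_n)$, and because $\beta_n$ is a saddle point the first-order variation of $Z_{\beta_n}^{s_n}\beta_n^{-(n+1)}$ vanishes, so the net effect on $\log\#\cU_{g_n,n}$ is only $O(n\varepsilon_n^2)=o(1)$; this is exactly why the precision $o(n^{-1/2})$ suffices. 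Finally, the borderline case $\theta=0$ (where $\beta_\theta=0$ and $\operatorname{Var}(X_{\beta_\theta})=0$) is genuinely degenerate --- the saddle collapses --- and would need a separate normalisation; I take $\theta\in(0,1/2)$ throughout.
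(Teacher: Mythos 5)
Your proof is correct and follows essentially the same route as the paper: both reduce the count via the bijection of \cref{thm:bij} to the number of odd-cycle permutations with $s_n$ cycles (your EGF coefficient $[x^{n+1}](\operatorname{arctanh} x)^{s_n}$ is just the Lehman--Walsh sum over partitions in different notation), then tilt by $\beta_n$ to express it as $Z_{\beta_n}^{s_n}\beta_n^{-(n+1)}\P(S_{s_n}=n+1)$ and apply the lattice local CLT with the same span-$2$ factor. Your added remarks on uniformity in $n$ and on the harmlessness of the $o(n^{-1/2})$ latitude are sound and, if anything, more careful than the paper's treatment.
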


Note that $\beta_n\to\beta_\theta$. If $g=\theta n + o(\sqrt{n})$ we may
take $\beta_n$ to be just $\beta_\theta$ and not depend on $n$.

\begin{proof}
  For $s,n\geq 1$, let $\mathcal{L}_s(n+1)$ be the set of partitions of
  $n+1$ having $s$ parts, all of odd size.  Recall that if $\lambda$ is a
  partition of $n+1$, the number of permutations having cycle-type
  $\lambda$ is given by
  \[
  \frac{(n+1)!}{\prod_i m_i! i^{m_i}},
  \]
  where for $i\geq 1$, $m_i=m_i(\lambda)$ is the number of parts of
  $\lambda$ equal to $i$.  Therefore by \cref{thm:bij}, the number of
  unicellular maps of genus $g_n$ with $n$ edges is given by
  \begin{equation}\label{eq:nbCTrees}
    \#\cU_{g_n,n} =
    \mathrm{Cat}(n) \frac{2^{s_n}}{2^{n+1}}
    \sum_{\lambda \in \mathcal{L}_{s_n}(n+1)} \frac{(n+1)!}{\prod_i m_i!
      i^{m_i}},
  \end{equation}
  where $\mathrm{Cat}(n)=\frac{(2n)!}{n!(n+1)!}$ is the $n$th Catalan number,
  i.e.\ the number of rooted plane trees with $n$ edges, the sum counts
  permutations, and the powers of $2$ are from the signs on cycles of the
  permutation and since the correspondence is $2^{n+1}$ to $1$.
  This is known as the Lehman-Walsh formula (\cite{LW}).

  Now, let $\beta\in(0,1)$ and let $X_1,X_2,\dots,X_s$ be i.i.d.\ copies of
  $X_\beta$.  By the local central limit theorem \cite[Chap.7]{Petrov}, if
  $n+1 = s\E[ X_\beta] + o(\sqrt{s})$ has the same parity as $s$, then
  $\P(\sum_{i\le s} X_i = n+1) \sim A s^{-1/2}$ where $A = 2/ \sqrt{2 \pi
    \mathrm{Var}(X_{\beta})}$.  The additional factor of $2$ comes from the
  parity constraint since $X_i$ is odd.  On the other hand, we have
  \begin{align*}
    \P\left(\sum_{i\le s} X_i = n+1\right)
    &= \sum_{\stackrel{k_1+\dots+k_s = n+1}{k_i\text{ odd}}} \prod_i
    \frac{\beta^{k_i}}{Z_\beta\cdot k_i} \\ 
    &= \frac{\beta^{n+1}}{(Z_\beta)^s} \sum_{\lambda \in
      \mathcal{L}_{s}(n+1)} \frac{s!}{\prod_i m_i! i^{m_i}},
  \end{align*}
  since $\frac{s!}{\prod_{i}m_i!}$ is the number of distinct ways to order
  of the parts of the partition $\lambda$. 

  Therefore if, as in the statement of the theorem, we pick $\beta_{n}$ so
  that $ \mathbb{E}[X_{\beta_{n}}] = (n+1)/s + o( 1/\sqrt{n})$, noticing
  that $\beta_{n} \to \beta_{\theta}$ and $ \mathrm{Var}(X_{\beta_{n}}) \to
  \mathrm{Var}(X_{\beta_{\theta}})$, it follows from \cref{eq:nbCTrees} and
  the last considerations that
  \[
  \#\cU_{g_n,n} \sim
  \frac{1}{2^{2g_n}}\mathrm{Cat}(n)\frac{(n+1)!}{s!}
  \frac{(Z_{\beta_{n}})^s}{\beta^{n+1}_{n}} A_{\theta} s^{-1/2}. 
  \qedhere
  \]
\end{proof}

The following theorem gives an asymptotic enumeration of unicellular maps
of high genus with a prescribed root degree.

\begin{thm}[Root degree distribution]\label{thm:root_deg}
  Assume $g_n \sim \theta n$ with $\theta\in(0,1/2)$, and let
  $\beta_\theta$ be the solution of \cref{eq:beta}. Then for every $d \in
  \N$ we have
  \[
  \P\left( U_{g_{n},n} \mbox{ has root degree }d\right)
  \xrightarrow[n\to\infty]{}
  \left(\frac{1-\beta_\theta^2}{4}\right) \frac{(1+\beta_\theta)^d -
    (1-\beta_\theta)^d}{2^d\beta_\theta}.
  \]
  Equivalently, the degree of the root of $U_{g_n,n}$ converges in
  distribution to an independent sum $\Geom(\frac{1+\beta_\theta}{2}) +
  \Geom(\frac{1-\beta_\theta}{2}) - 1$.
\end{thm}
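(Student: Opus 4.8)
The plan is to mimic the proof of \cref{thm:enum}, but with the plane tree in the $C$-decorated tree replaced by a \emph{doubly-rooted} object that tracks the root degree. First I would fix $d\in\N$ and count the $C$-decorated trees with $n$ edges, $s_n$ cycles, and root degree $d$; by \cref{thm:bij} this count, times $2^{s_n}/2^{n+1}$, equals the number of unicellular maps in $\cU_{g_n,n}$ with root degree $d$. The point is that the permutation part of the decoration is unaffected by the root-degree constraint, so the sum over cycle-types $\lambda\in\mathcal{L}_{s_n}(n+1)$ factors out exactly as before; what changes is that $\mathrm{Cat}(n)$ gets replaced by the number $N_d(n)$ of rooted plane trees with $n$ edges whose root has degree $d$. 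Thus
\[
\P\left( U_{g_n,n}\text{ has root degree }d\right)
= \frac{N_d(n)}{\mathrm{Cat}(n)}.
\]
So the whole theorem reduces to a classical planar-tree computation: the asymptotics of $N_d(n)/\mathrm{Cat}(n)$, where $N_d(n)$ is known in closed form (e.g. by the cycle lemma or a direct generating-function argument, $N_d(n) = \frac{d}{2n-d}\binom{2n-d}{n}$, equivalently the number of lattice excursions of length $2n$ whose first passage structure gives root degree $d$).

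Wait --- that can't be the whole story, because the right-hand side above would then be independent of $\theta$, whereas the claimed limit depends on $\beta_\theta$. The resolution, and the real content of the proof, is that the root-degree constraint is \emph{not} independent of the cycle-type sum: imposing that the root of the plane tree has degree $d$ changes how the remaining $n+1$ vertices can be distributed into the $s_n$ cycles, because the root vertex itself lies in one of those cycles. More carefully, one should decompose a root-degree-$d$ plane tree at its root into $d$ ordered subtrees with $n_1,\dots,n_d$ edges summing to $n-\,?$, and then the local-limit estimate must be applied to $\P(\sum_{i\le s} X_i = n+1)$ conditioned on the block containing the root. The cleanest route is: write the $C$-decorated-tree count with root degree $d$ as a sum over $(n_1,\dots,n_d)$ and over cycle-types, recognize the cycle-type sum again as $\P(\sum X_i = \cdot)$ up to the explicit prefactor from \cref{thm:enum}, and then --- dividing by $\#\cU_{g_n,n}$ from \cref{thm:enum} --- the Catalan-type and factorial prefactors cancel and one is left with a ratio of two local-CLT probabilities at nearby points, which tends to $1$, times a combinatorial factor that collapses to $\beta_\theta^{-?}$ times a sum over the $n_i$. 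Carrying out the sum over $(n_1,\dots,n_d)$ of $\prod_i$ (subtree generating function evaluated at the critical point $1/(4\beta_\theta)$ or similar) produces exactly $\bigl((1+\beta_\theta)^d-(1-\beta_\theta)^d\bigr)/(2^d\beta_\theta)$ up to the normalizing constant $(1-\beta_\theta^2)/4$.

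In more detail, the key steps in order are: (1) Use \cref{thm:bij} to reduce to counting $C$-decorated trees, and strip off the permutation/sign factors, writing everything in terms of the generating function $T(x)=\sum_n \mathrm{Cat}(n)x^n$ and its "pointed" variants. (2) Decompose a plane tree of root degree $d$ into an ordered $d$-tuple of subtrees, keeping track of which subtree (or the root itself) carries each of the $n+1$ tree-vertices that feed the permutation. (3) Apply the local central limit theorem for $\sum_{i\le s_n}X_{\beta_n}$ exactly as in \cref{thm:enum}, but now at the shifted value $n+1 - (\text{vertices not available})$; the Gaussian density is continuous, so the ratio of this to the unconstrained probability converges, and the $A_\theta$ and $\sqrt{s_n}$ factors cancel against those in \cref{thm:enum}. (4) Perform the resulting finite sum over subtree sizes, which becomes a $d$-fold convolution whose generating function evaluated at the relevant radius of convergence gives a rational expression in $\beta_\theta$; simplify to the stated formula. (5) Finally, observe that $\Geom(p)$ has probability generating function $\frac{px}{1-(1-p)x}$, so the product of generating functions for $\Geom(\tfrac{1+\beta_\theta}{2})$ and $\Geom(\tfrac{1-\beta_\theta}{2})$, divided by $x$, matches the computed probability generating function of the root degree, giving the ``equivalently'' reformulation. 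As a consistency check one should verify the distribution sums to $1$, which amounts to evaluating the generating function at $x=1$ and recovering the normalization $(1-\beta_\theta^2)/4 \cdot \frac{1}{\beta_\theta}\cdot\bigl(\frac{1}{1-\beta_\theta}-\frac{1}{1+\beta_\theta}\bigr)=1$ --- wait, $\frac{1}{1-\beta}-\frac{1}{1+\beta} = \frac{2\beta}{1-\beta^2}$, so this gives $\frac{1-\beta^2}{4}\cdot\frac{1}{\beta}\cdot\frac{2\beta}{1-\beta^2}=\frac12$; the missing factor of $2$ must come from the $x\mapsto$ evaluation including the extra $x$ in $\Geom+\Geom-1$, which I would track carefully.

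The main obstacle I expect is step (2)--(3): correctly bookkeeping how the root-degree decomposition interacts with the assignment of tree-vertices to permutation cycles, so that the cycle-type sum is \emph{exactly} a local-CLT probability at an explicitly shifted argument (and not merely up to uncontrolled error). Once that bookkeeping is pinned down, the local CLT gives the ratio converging to $1$ for free, and the rest is the generating-function simplification, which is routine. A secondary subtlety is that the argument as stated uses $\theta\in(0,1/2)$ strictly; the endpoint behavior ($\theta\to 0$, i.e. $\beta_\theta\to 1$, recovering trees) would need the separate treatment alluded to in the remark after \cref{thm:enum}, but that is outside the scope of this theorem.
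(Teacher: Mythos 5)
There is a genuine gap, and it is the central one: you never identify what ``root degree'' becomes under the bijection of \cref{thm:bij}. The bijection preserves the \emph{underlying graph}, which is obtained from the plane tree by identifying all vertices in each cycle of the permutation. Consequently the degree of the root vertex of the unicellular map is \emph{not} the degree of the root of the plane tree: it is the sum of the tree-degrees of \emph{all} the vertices lying in the permutation cycle that contains the tree's root. Your proposal conditions throughout on the \emph{tree} root having degree $d$ (hence the appearance of $N_d(n)$ and the decomposition into $d$ ordered subtrees), which computes the wrong quantity. You correctly notice that your first formula cannot be right because it is independent of $\theta$, but the resolution you offer --- that the root-degree constraint interferes with the cycle-type sum --- is not the actual mechanism; the permutation remains independent of the tree, and no amount of bookkeeping of how vertices are ``distributed into cycles'' will make the tree-root degree depend on $\beta_\theta$. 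Relatedly, the subtree generating functions attached to a uniform plane tree are evaluated at the critical point $1/4$ regardless of $\theta$, not at $1/(4\beta_\theta)$, so step (4) as described cannot produce the claimed $\beta_\theta$-dependence.

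The paper's proof goes through precisely the point you are missing. Since the permutation is independent of the tree, the cycle containing the root vertex is size-biased, and (by the local CLT computation already done for \cref{thm:enum}) its length converges in distribution to the size-biasing of $X_{\beta_\theta}$, namely to $K = 2\Geom(1-\beta_\theta^2)-1$; this is where the $\theta$-dependence enters. Conditionally on $K=2k+1$, the map's root degree is $D_0+D_1+\dots+D_{2k}$, where $D_0$ is the degree of the root of the uniform plane tree (converging to $Y+Y'-1$ with $Y,Y'$ independent $\Geom(1/2)$) and $D_1,\dots,D_{2k}$ are the degrees of $2k$ uniformly chosen other vertices (converging to independent $\Geom(1/2)$ variables). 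Summing $\sum_{i=0}^{K} Y_i - 1$ and using $\P\bigl(\sum_{i\le \ell} Y_i = m\bigr) = 2^{-m}\binom{m-1}{\ell-1}$ gives the stated formula. (As a minor aside, your normalization check has an arithmetic slip: $\sum_{d\ge1} x^d = x/(1-x)$ evaluated at $x=(1\pm\beta_\theta)/2$ gives $(1\pm\beta_\theta)/(1\mp\beta_\theta)$, whose difference is $4\beta_\theta/(1-\beta_\theta^2)$, so the distribution does sum to $1$ and there is no missing factor of $2$.)
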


\begin{proof}

  As in the proof of \cref{thm:enum}, we see that the length of a uniformly
  chosen cycle in a uniform random $C$-decorated tree with $n$ edges and
  $n+1-2g_n$ cycles is distributed as the random variable $X_1$ conditioned
  on the fact that $X_1+\dots+X_s=n+1$, where the $X_i$'s are i.i.d.\
  copies of $X_\beta$ for any choice of $\beta \in (0,1)$, and
  $s=n+1-2g_n$.  Using the local central limit theorem, we see that with
  $\beta_n$ chosen according to \cref{thm:enum}, when $n$ tends to
  infinity, this random variable converges in distribution to
  $X_{\beta_\theta}$.

  Since the permutation is independent of the tree, the probability that a
  cycle contains the root vertex is proportional to its size. Therefore the
  size of the cycle containing the root vertex converges in distribution to
  a size-biased version of $X_{\beta_\theta}$, which is a random variable
  $K$ with distribution $\P(K=2k+1) = (1-\beta_\theta^2)\beta_\theta^{2k}$,
  i.e.\ $K=2\Geom(1-\beta_\theta^2)-1$.

  Now by \cref{thm:bij}, conditionally on the fact that the cycle
  containing the root vertex has length $2k+1$, the root degree in
  $U_{g_{n},n}$ is distributed as $\sum_{i=0}^{2k} D_i$, where $D_0$ if the
  degree of the root of a random plane tree of size $n$, and $(D_i)_{i>0}$
  are the degrees of $2k$ uniformly chosen distinct vertices the tree.  It
  is classical, and easy to see, that when $n$ tends to infinity the
  variables $(D_i)_{i>0}$ converge in distribution to independent
  $\Geom(1/2)$ random variables, while $D_0$ converges to $Y+Y'-1$, where
  $Y,Y'$ are further independent $\Geom(1/2)$ variables.  All geometric
  variables here are also independent of $K$.

  From this it is easy to deduce that when $n$ tends to infinity, the root
  degree in $U_{g_n,n}$ converges in law to $\sum_{i=0}^{K} Y_i -1$ where
  $K$ is as above and the $Y_i$'s are independent $\Geom(1/2)$ variables.
  Since the probability that the sum of $\ell$ i.i.d.\ $\Geom(1/2)$ random
  variables equals $m$ is $2^{-m} \binom{m-1}{\ell-1}$, we thus obtain that
  for all $d\geq 1$, the probability that the root vertex has degree $d$
  tends to:
  \[
  \frac{1-\beta_\theta^2}{\beta_\theta} \sum_{k\geq 0} \beta_\theta^{2k+1}
  2^{-d-1} \binom{d}{2k+1} 
  =
  \frac{1-\beta_\theta^2}{4\beta_\theta}
  \frac{(1+\beta_\theta)^{d}-(1-\beta_\theta)^{d}}{2^d}.
  \qedhere
  \]
\end{proof}

\begin{remark}
It may be possible to prove Theorem~\ref{thm:root_deg} using the enumeration
results for unicellular
maps by vertex degrees found in \cite{GoupilSchaeffer}, although this would
require some computations. Here we prefer to prove it using the 
bijection of \cite{CFF}, since the proof is quite direct and gives a good understanding of
the probability distribution that arises. This is also the reason we 
prove \cref{thm:enum} from the bijection, rather than starting directly
from the Lehman-Walsh formula \eqref{eq:nbCTrees}.
\end{remark}

We now comment on a ``paradox'' that the reader may have noticed. For any
rooted graph $G$ and any $r \geq0$ we denote by $V_{r}(G)$ the set of
vertices which are at distance less than $r$ from the origin of the graph.
In $U_{g,n}$ the mean degree can be computed as
$$ \lim_{r \to \infty}  \frac{1}{\#V_{r}(U_{g,n})} \sum_{u \in
  V_{r}(U_{g,n})} \mathrm{deg}(u) = \frac{2n}{v}
\xrightarrow[n\to\infty]{}	2(1-2\theta)^{-1}.$$
However, if one interchanges  $\lim_{n\to \infty}$ and $\lim_{r \to
  \infty}$ a different larger result appears. Indeed, easy arguments about
Galton-Watson processes show that in $T_{\xi_{\theta}}^\infty$ we have
$$ \lim_{r \to \infty} \frac{1}{\# V_{r}(T_{\xi_{\theta}}^\infty)}\sum_{u
  \in V_{r}(T_{\xi_{\theta}}^\infty)} \mathrm{deg}(u)=
\frac{2}{1-\beta_{\theta}}.$$

% The degree distribution for the root vertex in a random map in
% $\cU_{g,n}$ is $\Geom(\xi) + \Geom(1-\xi) - 1$, whereas the degree
% distribution of a ``typical'' vertex in $\Tinf_\xi$ is $\Geom(1-\xi)$
% (including the edge towards the root). This is partly explained by the
% fact that the root of the map is not a uniform vertex, but is chosen with
% probability proportional to its degree. Indeed, when $\theta=0$ we have
% $\beta=0$ and $\xi=1/2$, and this bias precisely yields the difference in
% distributions. However, for $\beta>0$ the distributions still differ.
%
% In this case $\Tinf_\xi$ is not amenable, so the distribution of a
% ``uniform vertex'' in $\Tinf_\xi$ is not well defined. However, we may
% for example take the limit over a vertex uniform in a large ball, or take
% a continuous time random walk on the tree, and the discrepency remains.
% The true explanation lies in the fact that a uniformly chosen vertex in a
% large ball around a random root in the map is not distributed nearly
% uniformly over the map, and higher degree vertices are more likely to be
% chosen. This is true even when $g=0$, except that in that case the
% probability of selecting a vertex $y$ is asymptotically independent of
% $d_y$. We leave the verification of these facts to the interested reader.

%%%%%%%%%%%%%%%%%%%%%%%%%%%%%%%%%%%%%%%%%%%%%%%%%%%%%%%%%%%%%%%%%%
\subsection{The low genus case}

\begin{proof}[Proof of \cref{thm:main} for $\theta=0$]
  As noted, the case $g=0$ is well known. We argue here that the local
  limit for $g=o(n)$ is the same as for $g=0$. Indeed, the permutation on
  the tree contains $n+1-2g$ cycles, and so has at most $2g$ non-fixed
  points. (If cycles of length $2$ were allowed this would be $4g$.) Since
  the permutation is independent of the tree, and since the ball of radius
  $r$ in the tree distance is tight, the probability that any vertex in the
  ball is in a non-trivial cycle is $o(1)$ (with constant depending on
  $r$). In particular, the local limit of the unicellular map and of the
  tree are the same.
\end{proof}

%%%%%%%%%%%%%%%%%%%%%%%%%%%%%%%%%%%%%%%%%%%%%%%%%%%%%%%%%%%%%%%%%%
\section{The local limit}

\subsection{Surgery} 
\label{sec:surgery}

Throughout this subsection, we fix integers $n,g \geq 0$. Let $t$ be a
rooted plane tree of height $r \geq 1$ with $k$ edges and exactly $d$
vertices at height $r$.

\begin{lemma} \label{lem:surgery}
  For any $n,g,k,d,r \geq 0$ we have 
  \[
  \# \big\{ m \in \cU_{g,n} : B_{r}(m) = t \big\}  =
  \# \big\{m \in \cU_{g,n-k+d} \mbox{ with root degree } d \big\}.
  \]
\end{lemma}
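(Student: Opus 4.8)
The plan is to establish \cref{lem:surgery} by an explicit bijection, a \emph{surgery} that cuts the fixed tree $t$ out of a unicellular map and sews a star in its place. Write $x_1,\dots,x_d$ for the vertices of $t$ at height $r$, numbered according to the cyclic order in which the contour of $t$ visits them. The first step is to understand how $t$ can sit inside a map $m\in\cU_{g,n}$ with $B_r(m)=t$: every vertex of $t$ at height $<r$ is saturated in $m$ (all its incident edges, hence all its neighbours, already belong to $t$), so the rest of $m$ can be attached to $t$ only through $x_1,\dots,x_d$. Excising from $m$ the $k$ edges and the $k{+}1{-}d$ interior vertices of $t$, and marking at each $x_i$ the corner where $t$'s edge to its parent used to be attached, leaves a well-defined ``complement'' $N$ of $t$ in $m$ which retains all the remaining combinatorial information. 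The surgery then adds a fresh root vertex $\rho$, joins it by one edge to each of $d$ vertices $y_1,\dots,y_d$ that take over the roles of $x_1,\dots,x_d$, lists these $d$ edges around $\rho$ in the cyclic order dictated by $t$, and roots the resulting map $m'$ at the oriented edge from $\rho$ to $y_1$; this $m'$ has root degree $d$.

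I would then check that $m'\in\cU_{g,\,n-k+d}$ and that the map $m\mapsto m'$ is a bijection onto the maps of root degree $d$ in $\cU_{g,n-k+d}$. The edge count is immediate, $(n-k)+d$. For the genus, the number of vertices passes from $n+1-2g$ to $(n+1-2g)-(k{+}1{-}d)+1=(n-k+d)+1-2g$, so Euler's relation forces genus $g$ as soon as we know $m'$ has a single face; and it does, because the face-contour of $m$ is exactly the contour of $t$ with, inserted at each $x_i$, the sub-walk of $m$'s contour delimited there, and replacing the contour of $t$ by that of the star — which likewise visits each $y_i$ once, in the same cyclic order — reassembles these sub-walks into one closed walk. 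For the inverse, one starts from any $m'\in\cU_{g,n-k+d}$ of root degree $d$, deletes its root $\rho$, grafts the fixed tree $t$ so that $x_1,\dots,x_d$ receive, in the contour order of $t$, the $d$ portions that hung at $\rho$, and roots the result at the root of $t$; one verifies that its radius-$r$ ball is $t$ and that the two operations undo each other.

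The step I expect to be the crux is the bookkeeping that turns this picture into an honest bijection. The pieces of $m$ attached at different $x_i$ need not be disjoint sub-maps — an edge of $m$ outside $t$ may join material below some $x_i$ to material below some $x_j$ — so the proof that $m'$ has exactly one face, and the accompanying genus count, must be carried out at the level of the face-contour (equivalently, the rotation system of $N$), tracking precisely how coalescing the $x_i$ into the leaves of the star reorganises the unique face. One must also confirm that the chosen rooting and cyclic-order conventions single out a unique preimage for \emph{every} map of root degree $d$ in $\cU_{g,n-k+d}$, including those whose root carries loops or multiple edges: under the inverse surgery such loops and multiple edges become edges of $m$ running between two height-$r$ vertices of $t$, which are invisible to $B_r$, so these maps do lie in the image. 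Once the contour bookkeeping is in place, the genus and edge counts are routine.
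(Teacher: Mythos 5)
Your proposal is correct and is essentially the paper's own argument: the same surgery replacing $B_r(m)$ by a star of $d$ edges attached at the height-$r$ vertices, with the same rooting convention read off from the contour of $t$. The only difference is that the paper dispatches the crux you flag (preservation of the single face, hence of the genus) more quickly, by noting that the surgery is just the contraction of every edge of $t$ not incident to a height-$r$ vertex, and edge contraction changes neither the number of faces nor the genus --- your face-contour bookkeeping establishes the same fact by hand.
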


\begin{proof}
  The lemma follows from a surgical argument illustrated in
  Fig.~\ref{fig:surgery}: if $m \in \mathcal{U}_{g,n}$ is such that $
  B_{r}(m) = t$ we can replace the $r$-neighborhood of the root by a star
  made of $d$ edges which dimishes the number of edges of the map by $k-d$
  and turn it into a map of $ \mathcal{U}_{g,n-k+d}$ having root degree
  $d$. To be precise, consider the leaf of $t$ first reached in the contour
  around $t$. The edge to this leaf is taken to be the root of the new map.

  % a very nice figure.
  \begin{figure}[!ht]
    \begin{center}
      \includegraphics[width=11cm]{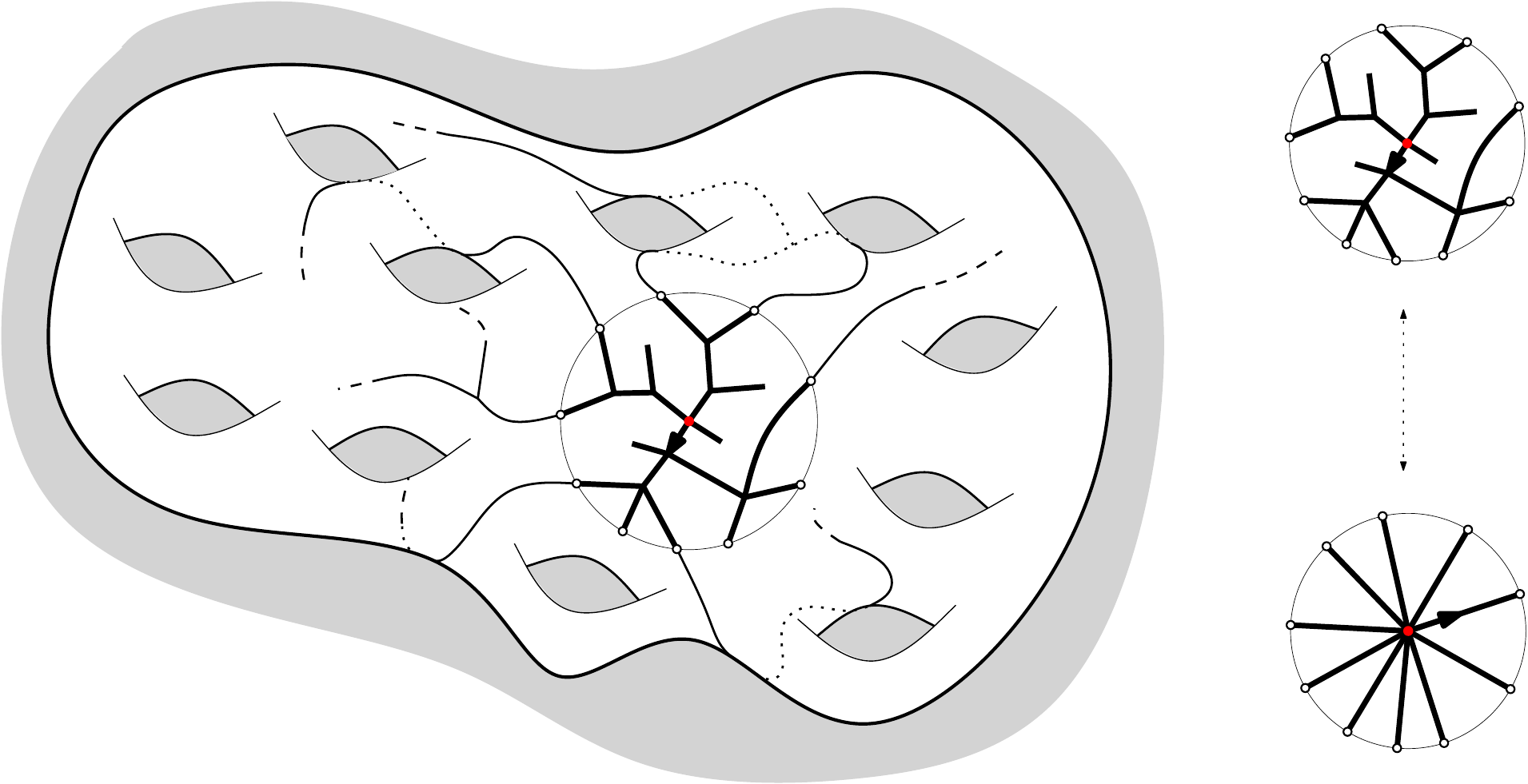}
      \caption{Illustration of the surgical operation}
      \label{fig:surgery}
    \end{center}
  \end{figure}

  It is clear that this operation is invertible. To see that it is a
  bijection between the two sets in question we need to establish that it
  does not change the genus or number of faces in a map. One way to see
  this is based on an alternative description of the surgery, namely that
  it contracts every edge of $t$ except those incident to the leaves, and
  it is easy to see that edge contraction does not change the number of
  faces or genus of a map.

\end{proof}

\subsection{Identifying the limit} \label{sec:identification}

Recall that for $\xi\in(0,1)$ we denote by $T_\xi$ the law of a
Galton-Watson tree with $\Geom(\xi)-1$ offspring distribution.  Note that
when $\xi \in (0,1/2)$ the mean offspring is strictly greater than $1$ and
so the process is supercritical, and recall that $\Tinf_\xi$ is $T_\xi$
conditioned to survive.  Plane trees can be viewed as maps, rooted at the
edge from the root to its first child.  For every $r \geq 0$, if $t$ is a
(possibly infinite) plane tree we denote by $B_r(t)$ the rooted subtree of
$t$ made of all the vertices at height less than or equal to $r$.

\begin{prop}\label{prop:GW}
  Fix $\xi \in (0,1/2)$.  For any tree $t$ of height exactly $r$ having $k$
  edges and exactly $d$ vertices at maximal height, we have
  \[
  \P\big(B_r(\Tinf_\xi) = t \big)
  %= \P\big(B_r(T_\xi) = t \mid \mathrm{non-extinction} \big)
  = \frac{\big(\xi (1-\xi)\big)^{k+1-d} \big((1-\xi)^d - \xi^d\big)}
         {1-2\xi}.
  \]
\end{prop}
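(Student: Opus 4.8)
The plan is to use that $T_\xi$ is supercritical, so that $\{T_\xi$ is infinite$\}$ is an event of positive probability and $\Tinf_\xi$ is simply $T_\xi$ conditioned on it — no size-biasing or Kesten-type spine construction is required. I would first record the two invariants of $T_\xi$ that appear in the formula. Its offspring generating function is
\[
f(s) \;=\; \sum_{j\ge 0} (1-\xi)^j \xi\, s^j \;=\; \frac{\xi}{1-(1-\xi)s},
\]
so the mean offspring is $f'(1) = (1-\xi)/\xi$, which exceeds $1$ precisely because $\xi < 1/2$, and the extinction probability $q$ is the smallest root in $[0,1]$ of $f(q)=q$, equivalently of $(1-\xi)q^2 - q + \xi = 0$. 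This gives $q = \xi/(1-\xi)$ and hence $\P(T_\xi \text{ is infinite}) = 1-q = (1-2\xi)/(1-\xi)$.

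Next I would split
\[
\P\big(B_r(\Tinf_\xi) = t\big) \;=\; \frac{\P\big(B_r(T_\xi) = t \text{ and } T_\xi \text{ is infinite}\big)}{\P(T_\xi \text{ is infinite})},
\]
and evaluate the numerator by the branching property at generation $r$. The event $\{B_r(T_\xi)=t\}$ depends only on the offspring of the vertices of $t$ at height $\le r-1$; there are $k+1-d$ of them (as $t$ has $k+1$ vertices, $d$ of them at height $r$), and together they have exactly $k$ children, one per edge of $t$. Since a vertex of $T_\xi$ has $j$ children with probability $(1-\xi)^j\xi$, multiplying over these $k+1-d$ vertices yields $\P(B_r(T_\xi)=t) = \xi^{k+1-d}(1-\xi)^{k}$. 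Conditionally on $\{B_r(T_\xi)=t\}$, the $d$ subtrees hanging from the height-$r$ vertices of $t$ are i.i.d.\ copies of $T_\xi$, and $T_\xi$ is infinite if and only if at least one of these subtrees is, which happens with probability $1-q^d$. Hence the numerator equals $\xi^{k+1-d}(1-\xi)^{k}\,(1-q^d)$.

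Finally, substituting $q = \xi/(1-\xi)$, so that $1-q^d = \big((1-\xi)^d - \xi^d\big)/(1-\xi)^d$ and $1-q = (1-2\xi)/(1-\xi)$, and collecting powers of $\xi$ and $1-\xi$, gives
\[
\P\big(B_r(\Tinf_\xi) = t\big) \;=\; \frac{\big(\xi(1-\xi)\big)^{k+1-d}\big((1-\xi)^d - \xi^d\big)}{1-2\xi},
\]
as claimed. I do not expect a genuine obstacle here; the only points needing care are the bookkeeping of vertices and edges of $t$ under the convention that $B_r$ keeps all vertices up to height $r$ (and the degenerate case $r=0$, where $t$ is a single vertex and the formula reads $1$), together with the harmless sanity check that summing this expression over all plane trees $t$ of height exactly $r$ returns $1$.
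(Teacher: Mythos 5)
Your proposal is correct and follows essentially the same route as the paper: compute the extinction probability $q=\xi/(1-\xi)$, note that $\P(B_r(T_\xi)=t)=(1-\xi)^k\xi^{k+1-d}$ by multiplying the offspring probabilities over the $k+1-d$ internal vertices, and use the branching property at generation $r$ to get the conditional survival probability $1-q^d$ before dividing by $1-q$. No discrepancies worth noting.
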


Note that the probability of observing $t$ does not depend on $r$, but only
on the number of edges and vertices where $t$ is connected to the rest of
$T_\xi$.

\begin{proof}
  Since $\xi\in(0,1/2)$ the Galton-Watson process is supercritical and
  by standard result the extinction probability $p_{\mathrm{die}}$ is
  strictly less than $1$ and is the root of $x = \sum_{k\ge0} x^k (1-\xi)^k\xi$
  in $(0,1)$.  Hence 
  \[
  p_{\mathrm{die}} = \frac{\xi}{1-\xi}.
  \]
  Next, fix a tree $t$ of height exactly $r$ with $k$ edges and $d$
  vertices at height $r$.  By the definition of $T_\xi$ if $k_u$ denotes
  the number of children of the vertex $u$ in $t$ we have
  \[
  \P(B_r(T_\xi) = t) = \prod_u (1-\xi)^{k_u}\xi = (1-\xi)^k \xi^{k+1-d} 
  \]
  where the product is taken over all the vertices of $t$ which are at
  height less than $r$.  Conditioned on the event $\{B_r(T_\xi) = t\}$, by
  the branching property, the probability that the tree survives forever is
  $(1 - p_{\mathrm{die}}^d)$.  Combining the pieces, we get the statement
  of the proposition.
\end{proof}

\begin{proof}[Proof of \cref{thm:main} for $\theta \in (0,1/2)$]
  Under the assumptions of \cref{thm:main}, fix $r$ and let $t$ be a rooted
  oriented tree of height exactly $r$ having $k$ edges and exactly $d$
  vertices at height $r$.  By \cref{lem:surgery} we have
  \begin{align*}
    \P(B_r(U_{n,g_n}) = t)
    &= \frac{\# \{m \in \cU_{g_n,n-k+d} \mbox{ with root degree } d \}}
    {\# \cU_{g_n,n}} \\ 
    &= \frac{\# \cU_{g_n,n-k+d}}{\# \cU_{g_n,n}} \cdot \P(\text{root degree
      of } U_{g_n,n-k+d} = d).
  \end{align*}
  Applying \cref{thm:root_deg} we have 
  \begin{equation}
    \label{eq:un}
    \P(\text{root degree of }U_{g_n,n-k+d} = d) \xrightarrow[n\to\infty]{}
    \left(\frac{1-\beta_\theta^2}{4\beta_\theta}\right)
    \frac{(1+\beta_\theta)^d - (1-\beta_\theta)^d}{2^d}.
  \end{equation}
  On the other hand, since $n/s = (n-k+d)/(s-k+d) + o(1/ \sqrt{n})$ we can
  apply \cref{thm:enum} for the asymptotic of $\#\cU_{g_n,n-k+d}$ and
  $\#\cU_{g_n,n}$ \emph{with the same sequence} $(\beta_n)$ and get that
  \[
  \frac{\# \cU_{g_n,n-k+d}}{\# \cU_{g_n,n}} \sim
  \frac{(2n+2d-2k)! n! s! Z_{\beta_{n}}^{d-k}}{(2n)! (n+d-k)! (s+d-k)!
    \beta_{n}^{d-k}}.
  \]
  Since $d,k$ are fixed, and using the facts that $\beta_n \to
  \beta_\theta$, $Z_{\beta_n} \to Z_{\beta_\theta}$ and $s/n\to
  (1-2\theta)$, the last display is also equivalent to   
  \begin{equation}
    \frac{\# \cU_{g_n,n-k+d}}{\# \cU_{g_n,n}} \sim
    \left(\frac{\beta_\theta(1-2\theta)}{4Z_{\beta_\theta}} \right)^{k-d}
    = \left( \frac{1-\beta_\theta^2}{4} \right)^{k-d}, \label{eq:deux}
  \end{equation}
  by the definition of $\beta_\theta$ in \cref{eq:beta2}.  Plugging
  \eqref{eq:un} and \eqref{eq:deux} together and using \cref{prop:GW} we
  find that
  \[
  \P(B_r(U_{g_n,n}) = t) \xrightarrow[n\to\infty]{}
  \P(B_r(\Tinf_{\xi_\theta}) =t),
  \]
  with $\xi_\theta = (1-\beta_\theta)/2$.

  Finally, note that the law of $B_r(\Tinf_{\xi_\theta})$ is a probability
  measure on the set of finite plane trees.  It follows that
  $B_r(U_{g_n,n})$ is tight, and converges in distribution to
  $B_r(\Tinf_{\xi_{\theta}})$. Since $r$ is arbitrary, this completes
  the proof of the Theorem.
\end{proof}

%%%%%%%%%%%%%%%%%%%%%%%%%%%%%%%%%%%%%%%%%%%%%%%%%%%%%%%%%%%%%%%%%%
\section{Questions and remarks}

\paragraph{Planarity.}
A consequence of \cref{thm:main} is that $U_{g_{n},n}$ is locally a tree
(hence planar) near its root. More precisely, the length of a minimal
non-trivial cycle containing the root edge diverges in probability as $n
\to \infty$. A much stronger statement has been proved in \cite{R13} where
quantitative estimates on cycle lengths are obtained. As noted above, our
proof does not rely on this result and our approach is softer. Note that
our method of proof only requires to prove convergences of the quantities
$\P(B_{r}(U_{g_{n},n}) = t)$ when $t$ is a tree since we were able to
identify these limits as coming from a probability measure on infinite
trees.

\paragraph{Open questions.}
We gather here a couple of possible extensions of our work.

\begin{open}
  Find more precise asymptotic formulae for $\#\cU_{g,n}$ as
  $g,n\to\infty$. \cref{thm:enum} gives a first order approximation.
\end{open}

\begin{open}
  Quantitatify the convergence of $U_{g_{n},n}$ to $T_{\xi_{\theta}}$. In
  particular, let $r_n = o(\log n)$. Is it possible to couple $U_{g_n,n}$
  with $T_{\xi_\theta}$ so that $B_{r_n}(U_{g_n,n}) =
  B_{r_n}(T_{\xi_\theta})$ with high probability?
\end{open}

% \begin{open}[Case $\theta =1/2$, root degree]
%   Suppose now that ${g_{n}} = n/2 - f(n)$ with $f(n) \to \infty$ and $f(n)
%   = o(n)$ as $n \to \infty$. Study the behavior and the concentration of
%   the root degree of $U_{g_{n},n}$.
% \end{open}

%%%%%%%%%%%%%%%%%%%%%%%%%%%%%%%%%%%%%%%%%%%%%%%%%%%%%%%%%%%%%%%%%%
\bibliographystyle{siam}
\bibliography{biblio}

\noindent
{\sc Omer Angel, Gourab Ray} \\
Department of Mathematics, University of British Columbia, Canada\\
{\em \{angel,gourab\}@math.ubc.ca}\\

\noindent
{\sc Guillaume Chapuy}\\
CNRS and LIAFA Universit\'e Paris Diderot - Paris 7, France\\
{\em guillaume.chapuy@liafa.univ-paris-diderot.fr} \\

\noindent
{\sc Nicolas Curien}\\
CNRS and LPMA Universit\'e Pierre et Marie Curie - Paris 6, France\\
{\em nicolas.curien@gmail.com}\\

\end{document}